\newtheorem{theorem}{Theorem}
\newtheorem{lemma}{Lemma}
\newcommand{\figpath}{.}
\newcommand{\bd}{\mathbf{d}}
\newcommand{\bD}{\mathbf{D}}
\newcommand{\vd}{\underline{d}}
\newcommand{\vD}{\underline{D}}
\newcommand{\din}{d^{\leftarrow}}
\newcommand{\dou}{d^{\rightarrow}}
\newcommand{\dun}{d^{\leftrightarrow}}
\newcommand{\Din}{D^{\leftarrow}}
\newcommand{\Dou}{D^{\rightarrow}}
\newcommand{\Dun}{D^{\leftrightarrow}}
\newcommand{\bi}{\mathbf{i}}
\newcommand{\bj}{\mathbf{j}}
\newcommand{\bk}{\mathbf{k}}
\newcommand{\Sin}{S^{\overset{(n)}{\leftarrow}}}
\newcommand{\Sou}{S^{\overset{(n)}{\rightarrow}}}
\newcommand{\Sun}{S^{\overset{(n)}{\leftrightarrow}}}
\renewcommand{\sin}{s^{\overset{(n)}{\leftarrow}}}
\newcommand{\sou}{s^{\overset{(n)}{\rightarrow}}}
\newcommand{\sun}{s^{\overset{(n)}{\leftrightarrow}}}
\newcommand{\toind}{\xrightarrow{D}}
\newcommand{\toinp}{\xrightarrow{P}}
\newcommand{\toas}{\xrightarrow{a.s.}}
\newcommand{\Nwt}{\widetilde{N}}
\newcommand{\Mn}{M^{(n)}}
\newcommand{\Mone}{M_{1}^{(n)}}
\newcommand{\Mr}{M_{r}^{(n)}}
\newcommand{\E}{\mathrm{E}}
\renewcommand{\Pr}{\mathrm{P}}
\newcommand{\muin}{\mu^{\leftarrow}}
\newcommand{\muou}{\mu^{\rightarrow}}
\newcommand{\muun}{\mu^{\leftrightarrow}}
\newcommand{\pin}{p^{\leftarrow}}
\newcommand{\pou}{p^{\rightarrow}}
\newcommand{\pun}{p^{\leftrightarrow}}
\newcommand{\One}{\mathbbm{1}}
\newcommand{\Qin}{Q^{\overset{(n)}{\leftarrow}}_{\bi}}
\newcommand{\Qou}{Q^{\overset{(n)}{\rightarrow}}_{\bj}}
\newcommand{\Qun}{Q^{\overset{(n)}{\leftrightarrow}}_{\bk}}
\newcommand{\qin}{q^{\overset{(n)}{\leftarrow}}_{\bi}}
\newcommand{\qou}{q^{\overset{(n)}{\rightarrow}}_{\bj}}
\newcommand{\qun}{q^{\overset{(n)}{\leftrightarrow}}_{\bk}}
\newcommand{\vn}{v^{(n)}}
\newcommand{\wn}{w^{(n)}}
\newcommand{\Vn}{V^{(n)}}
\newcommand{\Wn}{W^{(n)}}
\newcommand{\refEqn}[1]{\emph{Eq. (\ref{eqn:#1})}}
\newcommand{\refFig}[1]{\emph{Figure \ref{fig:#1}}}
\newcommand{\refTab}[1]{\emph{Table \ref{tab:#1}}}
\newcommand{\refSec}[1]{\emph{Section \ref{sec:#1}}}
\newcommand{\refThm}[1]{\emph{Theorem \ref{thm:#1}}}
\newcommand{\refLem}[1]{\emph{Lemma \ref{lem:#1}}}
\newcommand{\dTV}{d_{\text{TV}}^{(n)}}
\begin{document}

\title{The Configuration Model for Partially Directed Graphs}

\author{Kristoffer Spricer\thanks{Department of Mathematics, Stockholm University, 106 91 Stockholm, Sweden} \thanks{Corresponding author: spricer@math.su.se} \and Tom Britton\thanks{Department of Mathematics, 106 91 Stockholm, Stockholm University}
}

\date{March 11, 2015}

\maketitle

\begin{abstract}
  The configuration model was originally defined for undirected networks and has recently been extended to directed networks. Many empirical networks are however neither undirected nor completely directed, but instead usually partially directed meaning that certain edges are directed and others are undirected. In the paper we define a configuration model for such networks where nodes have in-, out-, and undirected degrees that may be dependent. We prove conditions under which the resulting degree distributions converge to the intended degree distributions. The new model is shown to better approximate several empirical networks compared to undirected and completely directed networks.
\end{abstract}

\section{Introduction}

Graphs appear in many current applications. In social sciences groups of people are often modeled by letting the vertices in the graph represent persons and edges represent the interactions or relationships between them. Edges can be directed or undirected, the later indicating a reciprocal relationship between the vertices.

Usually the graphs created from such datasets are simplifications of the original dataset. One typical simplification is to allow only directed or only undirected edges. However, in real world graphs it is common to find a combination of directed and undirected edges. In \cite{Malmros2013} we find some examples of empirical graphs where the proportion of directed edges is in the range 0.26-0.85, the rest being undirected edges. Additional examples are shown in \refTab{directed} where the proportion of directed edges has been calculated for some social networks that can be found in \cite{StanfordData}. We expect such graphs to be better represented by \emph{partially directed graphs}, where we allow both directed and undirected edges.
\begin{table}
  \centering
  \begin{tabular}{@{}lrrr@{}}
    \toprule
    Data set & \# vertices & \# edges & Proportion directed\\
    \midrule
    soc-LiveJournal1 & 4\,847\,571 & 42\,851\,237 & 0.402\\
    soc-Epinions1 & 75\,879 & 506\,585 & 0.996\\
    soc-Pokec & 1\,632\,803 & 22\,301\,964 & 0.627\\
    soc-Slashdot0922 & 82\,168 & 504\,230 & 0.274\\
    email-EuAll & 265\,214 & 310\,006 & 0.851\\
    wiki-Vote & 7\,115 & 100\,762 & 0.971\\
    wiki-Talk & 2\,394\,385 & 4\,659\,565 & 0.922\\
    \bottomrule
  \end{tabular}
  \caption{Proportion of directed edges for some data sets from \protect\cite{StanfordData}, when viewed as partially directed graphs. We see that several of these graphs have a substantial proportion of undirected edges and of directed edges, such that neither type should be ignored.}
  \label{tab:directed}
\end{table}

The \emph{configuration model} has been used extensively to model undirected networks \cite{Bollobas2001,MolloyReed}. It has also been been adapted to work for directed graphs \cite{ChenCravioto2013}. In the configuration model the graph is constructed by first assigning a degree to each vertex of the graph and then connecting the edges uniformly at random. The degrees of the vertices of the graph are either given as a degree sequence or the degrees are drawn from some given degree distribution. Graphs created in this way will share some properties with real world graphs, but will be different in other aspects. E.g. the configuration model for directed networks will have a very low proportion of reciprocal edges, i.e. two parallel directed edges in opposite directions. This is an effect of connecting edges uniformly at random in this type of graph. This can be undesirable if we wish to use the configuration model graph as a \emph{null reference} to compare with a real-world graph. While we wish to connect the edges uniformly at random, we may want to preserve the degree distribution, including any dependence between the indegrees, outdegrees and undirected degrees.

In this paper we consider a partially directed configuration model where we allow both directed and undirected edges. Any vertex in such a partially directed configuration model graph can have all three types of edges: \emph{incoming}, \emph{outgoing} and \emph{undirected}. We select the degree of each vertex from a given joint, three dimensional degree distribution and we do not assume or require the in-, out- and undirected degrees to be independent. When connecting the edges, outgoing edges can only connect to incoming edges and undirected edges can only connect to undirected edges. Once all edges are connected we make the graph \emph{simple} and thus do not allow self loops or parallel edges of any type. We make the graph simple by erasing conflicting edges and by converting parallel undirected edges in opposite directions into undirected edges. Since this process modifies the degree of some of the vertices, it is not certain that the empirical degree distribution converges to the degree distribution we started with. However, in \refSec{mainresults} we show that, with suitable restrictions on the first moments of the degree distribution, the degree distribution asymptotically converges to the desired one.

Note that, by selecting a joint degree distribution in the proper way we can also create completely directed graphs or completely undirected graphs, with or without any dependence between the degrees. Thus the presented partially directed configuration model incorporates several of the already existing models.

In the next section, \refSec{mainresults}, we present definitions and state the main result of the paper. Detailed derivations and proofs have been postponed to \refSec{proofs}. To illustrate how these graphs work, \refSec{examples} is devoted to some simulations of partially directed graphs, showing results for small and for large \( n \). The latter is to give an intuitive feeling for the asymptotic results and the former is to illustrate that significant deviations from these asymptotic results are possible for small \( n \). A comparison with an empirical social network is also done. Conclusions and discussion can be found in \refSec{concl}.

\section{Definitions and Results}
\label{sec:mainresults}

In this section we define the configuration model for partially directed graphs. We define the terminology used, how the graph is created from a degree distribution, how the graph is made simple and finally show, with suitable restrictions on the first moments of the degree distribution, that the degree distribution of the partially directed configuration model graph asymptotically converges to the desired distribution. Proofs are left for \refSec{proofs}

\subsection{Terminology}
A graph consists of vertices and of edges. The size of the graph, the number of vertices, is denoted \( n \). Here we will specifically study the case when \( n\to\infty \). We work with graphs that are \emph{partially directed}, meaning that any vertex can have incoming edges, outgoing edges and undirected edges. We distinguish between edges and \emph{stubs}. By stubs we mean yet unconnected half-edges of a vertex. In the same way as edges, stubs can be in-stubs, out-stubs and undirected stubs. The number of stubs of the different types is the degree of a vertex and will be denoted \( \vd=(\din,\dou,\dun) \), where the individual terms represent the  indegree, outdegree and undirected degree, respectively. When the degree of the vertex is a random quantity, it is denoted \( \vD=(\Din,\Dou,\Dun) \).

A degree sequence that is non random is denoted \( \bd = \{\vd_{r}\} = \{(\din_{r},\dou_{r},\dun_{r})\} \), \( {r=1,...,n} \), where \( n \) is the number of vertices in the graph. When these degree sequences are random vectors they are denoted \( \bD = \{\vD_{r}\} = \{(\Din_{r}, \Dou_{r}, \Dun_{r})\} \).

Degrees can be assigned to the vertices from some given joint degree distribution with distribution function \( F \) for which the probability of a specific combination of indegree, outdegree and undirected degree is called \( p_{\vd}=p_{ijk}=P(\vD{=}(i,j,k)) \). We will also use the marginal distributions. We have \( p^{\leftarrow}_{i}=p_{i..} = \sum_{jk} p_{ijk} \) for the incoming edges, \( p^{\rightarrow}_{j}=p_{i.k} = \sum_{ik} p_{ijk} \) for the outgoing edges and \( p^{\leftrightarrow}_{k}=p_{..k} = \sum_{ij} p_{ijk} \) for the undirected edges. The corresponding random variables, i.e. the number of edges of each type, will be denoted \( \Din \), \( \Dou \) and \( \Dun \).

Other quantities of interest are the moments of the distribution. Here we will consider the first moments \( \muin=\E[\Din]=\sum i\pin_{i} \), \( \muou=\E[\Dou]=\sum j\pou_{j} \) and \( \muun=\E[\Dun]=\sum k\pun_{k} \).

A graph is \emph{simple} if there are no unconnected stubs, no self-loops and no parallel edges.

For a finite graph of size \( n \) we also want to count the number of vertices with a certain degree \( \vd \). We call this quantity \( N_{\vd}^{(n)} \). Dividing by \( n \) we can calculate \( N_{\vd}^{(n)}/n \), the proportion of the number of edges  that have degree \( \vd \). Whenever the graph is created by some random process, we can also consider the expectation of this random quantity \( p_{\vd}^{(n)}:=\E\left[N_{\vd}^{(n)}/n\right] \), which defines the distribution function \( F^{(n)} \).

\subsection{Defining the Model}
\label{sec:creatinggraph}
We define the partially directed configuration model as follows:
\begin{enumerate}
  \item We start with a graph with \( n \) vertices, but without any edges or stubs.
  \item For each vertex, we independently draw a degree \( \vD_{r} \) from \( F \) at random.
  \item We connect undirected stubs with other undirected stubs. We do this by picking two undirected stubs uniformly at random and connecting them. We repeat this with the remaining unconnected undirected stubs until there is at most one undirected stub left.
  \item We connect directed incoming stubs with directed outgoing stubs. We do this by picking one directed incoming stub and one directed outgoing stub, both independently and uniformly at random and then connecting them. We repeat this with the remaining unconnected directed stubs until we are out of incoming stubs or outgoing stubs (or both).
  \item We want the graph to be simple, but the connection process may have left some stubs unconnected and may also have created self-loops and parallel edges. We make the graph simple by erasing some stubs and edges. We define the procedure in such a way that the connectivity of the graph is maintained:
  \begin{enumerate}
    \item Erase all unconnected stubs. There can be at most one unconnected undirected stub, while there may be a larger number of unconnected directed stubs, either all incoming or all outgoing, if the  number of in-stubs is not equal to the number of out-stubs.
    \item Erase all self-loops, both directed and undirected.
    \item When there are parallel \emph{identical} edges, erase all except one of them.
    \item Erase all directed edges that are parallel to an undirected edge.
    \item Erase each pair of reciprocal directed edges and add a single undirected edge instead. While this step decreases the number of directed edges, it also increases the number of undirected edges.
  \end{enumerate}
\end{enumerate}

From the above description we see that there are two non-deterministic steps that affect the degrees of the vertices in the creation of the simple partially directed graph:
\begin{enumerate}
 \item Assigning degrees from the distribution \( F \).
 \item Connecting the stubs uniformly at random. While this does not, in itself, modify the degrees of the vertices, it affects which stubs and edges that will be erased when making the graph simple.
\end{enumerate}

This process results in a finite graph for which the value of \( \gamma \) had been closer to 2, then the average number of deleted stubs would not have decreased as clearly as it does now, indicating that the average number of deleted edges then decreases only slowly with the graph size. This still would not in itself contradict convergence in distribution since a large proportion of the deleted edges can then be contributed to a small number of vertices of high degree, and so would not affect the overall convergence of the degree distribution.ch the degree distribution cannot be expected to be the same as \( F \). However, we later show that, with suitable restrictions on the distribution \( F \), the distribution \( F^{(n)} \) that was defined above, asymptotically approaches \( F \).

\subsection{Asymptotic Convergence of the Degree Distribution}
The results in this section are inspired by, and to some degree follow \cite{Britton2006}. The theorem establishes the asymptotic convergence of the degree distribution.

\begin{theorem}
\label{thm:distribution}
  If \( F \) has finite mean for each component, so \( \muin<\infty \), \( \muou<\infty \), and \( \muun<\infty \), and also \( \muin=\muou \) then, as \( n\to\infty \)
  \begin{enumerate}
    \renewcommand{\labelenumi}{\alph{enumi})}
    \itemsep0cm
    \parskip0cm
    \item \( F^{(n)}\to F \),
    \item \( N_{\vd}^{(n)}/n\toinp p_{\vd} \), that is, the empirical distribution converges in probability to F.
  \end{enumerate}
\end{theorem}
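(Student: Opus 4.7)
The plan is to compare the post-simplification count $N_{\vd}^{(n)}$ with the count $\Nwt_{\vd}^{(n)}$ of vertices whose iid draw in step~2 of the construction was exactly $\vd$. Since $\vD_1,\ldots,\vD_n$ are iid with distribution $F$, the strong law of large numbers gives $\Nwt_{\vd}^{(n)}/n \toas p_{\vd}$ and $\E[\Nwt_{\vd}^{(n)}/n]=p_{\vd}$. Both parts of the theorem therefore reduce to showing that the number $\Delta^{(n)}$ of vertices whose degree vector is altered by the simplification substeps 5(a)--5(e) satisfies $\Delta^{(n)}/n\toinp 0$; part (a) then follows by dominated convergence, since $\Delta^{(n)}/n\leq 1$.

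I would bound the contributions to $\Delta^{(n)}$ substep by substep. Step 5(a) discards at most $|\Sin-\Sou|+1$ unpaired stubs, and the hypothesis $\muin=\muou$ together with the iid LLN applied to $\Sin/n$ and $\Sou/n$ forces $|\Sin-\Sou|/n\toinp 0$. For the remaining substeps I would use truncation: fix $M$ and call a vertex \emph{small} if each coordinate of its degree is at most $M$; by iid LLN the proportion of non-small vertices converges to $\Pr(\max(\Din,\Dou,\Dun)>M)$, which vanishes as $M\to\infty$. For a small vertex $r$ the probability of a directed self-loop is at most $\Din_r\Dou_r/(\Sin-1)\leq M^2/(\Sin-1)$ and of an undirected self-loop at most $\binom{\Dun_r}{2}/(\Sun-1)\leq M^2/\Sun$, so the expected number of small vertices affected by the self-loop steps is $O(1)$.

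The delicate piece is parallel edges (5(c)--5(e)), since the naive bound on the probability that two out-stubs of $r$ share a target involves $\sum_v \Din_v(\Din_v-1)/\Sin^{2}$ and $\Din$ need not have a second moment. I would introduce a secondary truncation on targets at level $L$ and split the event: the probability that two out-stubs of a small $r$ both land on a target with $\Din_v\leq L$ is at most $\binom{M}{2}L/(\Sin-1)$, while the probability that any out-stub of $r$ lands on a target with $\Din_v>L$ is at most $M\sum_{v:\Din_v>L}\Din_v/\Sin$, which converges in probability to $M\,\E[\Din\One(\Din>L)]/\muin$ and tends to $0$ as $L\to\infty$ by integrability of $\Din$. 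The reciprocal-pair and directed-parallel-to-undirected substeps are handled analogously with the roles of in-, out- and undirected stubs interchanged. Given $\varepsilon>0$, choosing $M$ large and then $L$ large makes the expected count of non-small vertices plus affected small vertices at most $\varepsilon n$, and Markov's inequality then yields $\Pr(\Delta^{(n)}>\varepsilon n)\to 0$. The main obstacle is precisely the lack of second moments: the classical one-shot bound on expected self-loops and multi-edges fails, and it is the double truncation above, combined with the dominated-convergence fact $\E[\Din\One(\Din>L)]\to 0$, that allows the argument to go through under the first-moment hypothesis alone.
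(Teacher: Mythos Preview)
Your reduction to controlling the number $\Delta^{(n)}$ of modified vertices is the same as the paper's (its Lemmas~1 and~2), but from that point on you take a genuinely different route. The paper does \emph{not} bound bad events one by one; instead it computes $\Pr(\Mone{=}0\mid\vD_1{=}\vd,\bD^{(n)})$ \emph{exactly} as a sum over all ordered $d$-tuples of distinct target vertices, uses exchangeability to collapse the sum to $(n-1)\cdots(n-d)\,\E[\Qin\Qou\Qun]$, rescales by $n^{d}$, and then invokes a Fatou/Skorokhod argument (its Lemma~3) to upgrade convergence in distribution of the integrand to convergence of the expectation --- this last step is where the second-moment assumption is bypassed. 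Your approach replaces that Fatou trick with a double truncation: cap the source degree at $M$ and the target degree at $L$, and use integrability of $\Din,\Dou,\Dun$ to make the large-target tail $\E[\Din\One(\Din{>}L)]/\muin$ small. Both methods work under first moments only; the paper's is slicker and treats all erasure rules in one stroke (no substep-by-substep case analysis), while yours is more elementary, avoids Skorokhod, and makes the role of the first-moment hypothesis completely explicit through the tail term. One small remark: in your step~5(a) bound you should also note that the single possible leftover \emph{undirected} stub contributes at most one modified vertex, and in step~5(e) the replacement adds an undirected edge, so both endpoints are modified --- your framework already accommodates this since you only need $\Delta^{(n)}/n\toinp 0$, but it is worth saying.
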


The proof, which is postponed to \refSec{proofs}, follows the same line of reasoning  as in \cite{Britton2006}, but with modifications to take into account the complications introduced by allowing both directed and undirected edges in the graph.

\section{Examples of Partially Directed Graphs}
\label{sec:examples}
Although \refThm{distribution} establishes the asymptotic convergence of the degree distribution, it remains to see how how well this holds for finite graphs. In this section we investigate this by looking at a scale-free distribution, at a Poisson degree distribution and at an empirical network. Since we are working with a joint degree distribution, in addition to the distribution for each of the three stub types we also need to consider the possible dependence between the different types. \refTab{examples} gives an overview of how the data for the plots were created.

\begin{table}[ht!]
  \centering\footnotesize
  \begin{tabular}{@{}p{0.09\textwidth}p{0.48\textwidth}p{0.16\textwidth}p{0.16\textwidth}@{}}
    \toprule
    
    Degree\newline distribution & Method & Independent & Dependent\\
    
    \midrule
    
    Empirical & Empirical degree data from the dataset \mbox{\emph{soc-LiveJournal1}} \cite{StanfordData} was used. This is data from an on-line social site. Some characteristics can be found in \refTab{directed}. The mean degrees for in-degrees, out-degrees and undirected degrees are 3.6, 3.6 and 10.6, respectively (not shown). When viewed as a directed graph and counting all stubs this gives a total mean degree of approximately 28.3. Here we count the undirected edges as two edges since it consists of an incoming and an outgoing edge, when viewed as an edge in a directed graph. Both the directed and the undirected edges have degree distributions that are \emph{approximately} scale-free in the tail, with \( \gamma_{\text{directed}}\approx 2.5 \) and \( \gamma_{\text{undirected}}\approx 3.5 \) (not shown). & Each stub type is treated individually and independent samples are drawn, with replacement, for each vertex and each stub type. & Independent samples of complete vertices are drawn, with replacement, from the pool of empirical vertices.\\
    \addlinespace
    
    Scale-free & The selected distribution function is \[ F(k)=1-\frac{(k+d)^{-(\gamma-1)}}{d^{-(\gamma-1)}}, \] with \[ d=\left(\zeta(\gamma)*(\gamma-1)\right)^{-\frac{1}{\gamma-1}}, \] where \( \zeta(g) \) is the Riemann zeta function. The tail of this distribution is asymptotically \( p_k\propto k^{-\gamma} \). This specific distribution function was selected because of its scale-free property, while still being easy to simulate from using a discrete variant of the inverse transformation method \cite[see Section 11.2.1 and also Example 11.7]{Ross}). For all simulations \( \gamma=2.5 \), which is the coefficient for the directed edges in the empirical graph. This value gives finite expectation (approximately 2.7), but infinite variance. This is consistent with the assumptions in \refThm{distribution}. &  For each vertex and each stub type an independent sample from the assigned distribution was drawn. & For each vertex an independent sample from the assigned distribution was drawn and the same degree was assigned to all stubs for the vertex.\\ \addlinespace
    
    Poisson & Degrees drawn from Poisson distribution with parameter 7, thus having mean degree 7. When treated as a directed graph and counting all stubs the total mean degree is 28, close to the value 28.3 for the empirical graph above. &  See above. & See above.\\
    
    \bottomrule    
  \end{tabular}
  \caption{Explanation of how the graphs were created.}
  \label{tab:examples}
\end{table}

Since \refThm{distribution} focuses on showing convergence to the correct degree distribution, studying the total variation distance, \( \dTV \) (defined in \refSec{totVarDist}), is of interest (see e.g. \cite{Grimmet}). We also study the number of erased edges as a function of the graph size. Finally, we study the size of the strongly connected giant component and the distribution of small components for a few different graphs based on the empirical data from LiveJournal. The dataset LiveJournal1 \cite{StanfordData} is a directed graph created from the declaration of friends in a social internet community. The original graph contains self loops, but these have been removed in this analysis. The simple graph has a proportion of directed edges of about 0.4, so this is a good example of a graph where both directed and undirected edges play an important role. When sampling from this distribution to create the configuration model graph, the degrees of vertices from the original (partially directed) graph were drawn independently and uniformly at random, with replacement. Thus the frequencies of the degrees found in the graph were used as the given distribution \( F \) and this distribution function is then compared with the distribution \( F^{(n)} \) created by sampling from \( F \), connecting the edges and making the graph simple.

\subsection{Total Variation Distance}
\label{sec:totVarDist}
\refThm{distribution} states that \( N_{\vd}^{(n)}/n\toinp p_{\vd} \) and thus we define the following version of the total variation distance: 
\begin{equation}
  \dTV = \frac{1}{2}\sum\limits_{\vd}|p_{\vd}-N_{\vd}^{(n)}/n|,
\end{equation}
where the \( 1/2 \) is introduced so that \( \dTV \) can only take on values in the range \( [0, 1] \). As \( n\to\infty \) we expect to see that the total variation distance tends towards zero. When we generate the graphs according to the configuration model we replace \( N_{\vd}^{(n)} \) with the corresponding empirical sample \( m_{\vd}^{(n)} \) from one realization of a random graph. We can then repeat this process with more samples of random graphs and plot this. The result is shown in \refFig{totVarDist}, where we have also taken the average of the empirical total variation distance for 100 random graph samples. 

\begin{figure}
  \centering
  \includegraphics[width=0.9\textwidth]{\figpath/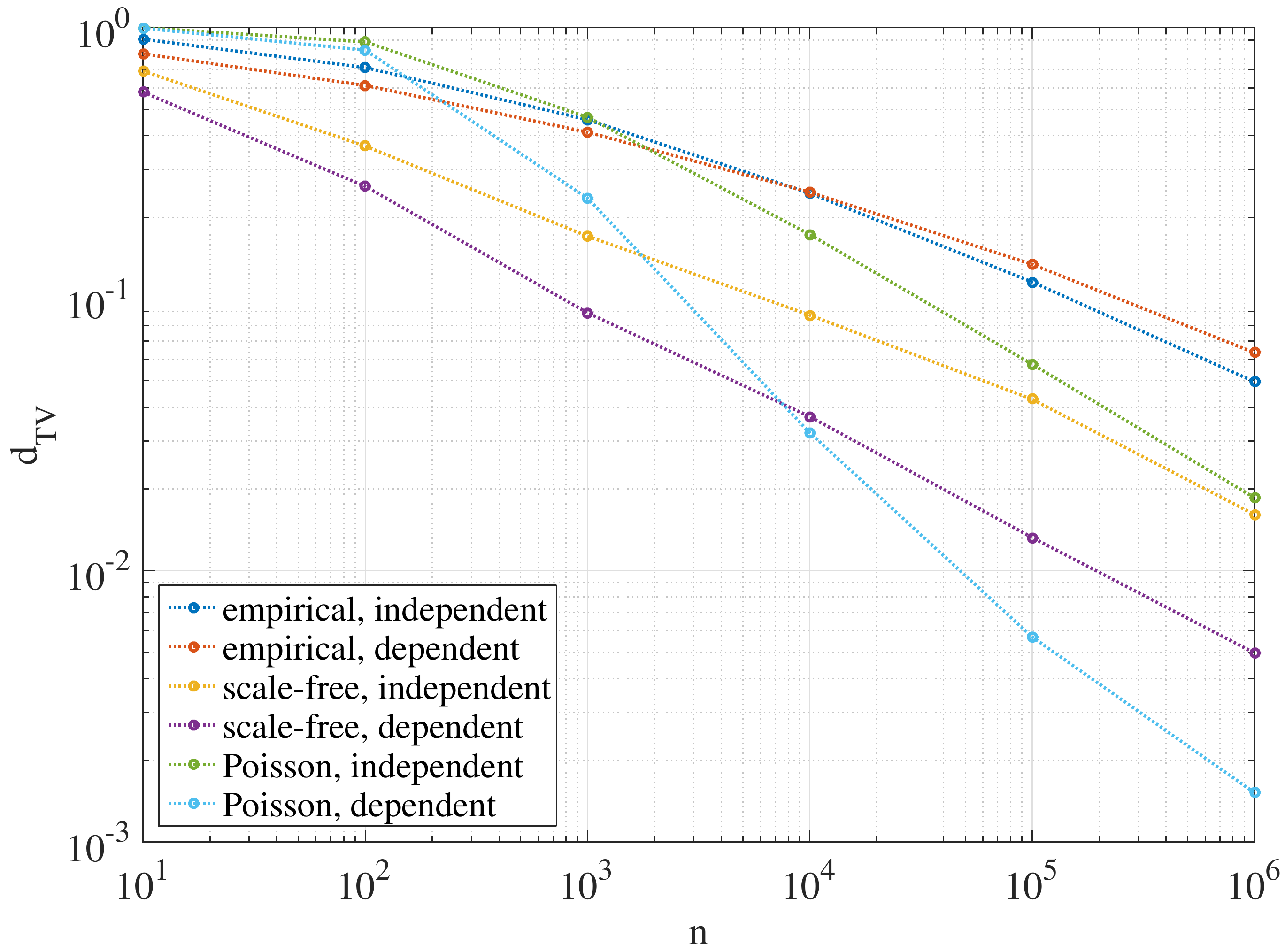}
  \caption{The total variation distance versus graph size for three different degree distributions, with independent or dependent in-, out- and undirected degrees for the stubs. Each data point shows the average of 100 simulations. All curves decrease towards zero}
  \label{fig:totVarDist}
\end{figure}

In \refFig{totVarDist} we see that the total variation distance decreases towards zero. The fastest decrease is for the Poisson graph, and the reason is that this distribution has a light tail when compared with the scale-free distribution. A closer look att the empirical graph reveals that the distributions for the directed and the undirected edges look much like a scale-free distribution. The in- and the out-degree have \( \gamma\approx 2.5 \) and the undirected degree has \( \gamma\approx 3.5 \) in the tail (not shown). Thus the tail for the empirical distribution is heavier than for the Poisson distribution and so we can expect a slower convergence for the empirical graph. Even slower convergence has been observed (not shown) for values of \( \gamma \) even closer to 2, e.g. try \( \gamma=2.1 \). This is not surprising as the distribution then becomes more heavy-tailed. If we continue even further, to \( \gamma\leq 2 \) the conditions used in the proof of \refThm{distribution} no longer hold, since the expectations are no longer finite, and thus we should not expect the total variation distance to converge to zero for these values of \( \gamma \).

From the figure we also see that the \emph{dependent} curve for the Poisson distribution is clearly lower than the \emph{independent} curve. One explanation for this is that when the degrees for in-stubs and the out-stubs are identical for each vertex, as in the dependent graph (as defined in \refTab{examples}), the total number of in-stubs will be equal to the total number of out-stubs and thus no directed stubs will be deleted for this reason. There may still be self-loops and parallel edges, but for the Poisson graph these are few compared to the number of stubs deleted in the independent graph (as defined in \refTab{examples}) where there is a mismatch between the number of in-stubs and the number of out-stubs. For the empirical graph and for the scale-free graph the same phenomenon cannot be observed. One explanation to this is that the scale-free independent model is not dominated by the deletion of leftover directed edges. Instead the number of self-loops and parallel edges are of the same order of magnitude as the leftover directed edges (see \refFig{relDelEdges}). Thus the difference between the curves for the total variation distance is much smaller for the scale-free and for the empirical graph.

Another answer to why the empirical graph does not show a big difference between the dependent and the independent curve can be that the dependent version of the empirical graph does not have the same type of complete dependence as for the scale-free or the Poisson graph. In the empirical dependent graph, degrees are assigned by sampling the degrees of vertices from the original empirical graph, and thus the number of in-stubs will in general not equal the number of out-stubs. Looking at \refFig{relDelEdges} we see that the number of directed unconnected edges is almost the same for the independent version as for the dependent version of the empirical graph. Looking instead at the same plot for the Poisson graph we note that the deletion of directed unconnected stubs dominates the independent version of the graph, while there are no such deleted stubs in the dependent version of the graph.

\subsection{The Average Number of Erased Edges per Vertex}

\begin{figure}
  \centering
  \newcommand{\subgraphWidth}{0.48}
  \begin{subfigure}{\subgraphWidth\textwidth}
    \includegraphics[width=1\textwidth]{\figpath/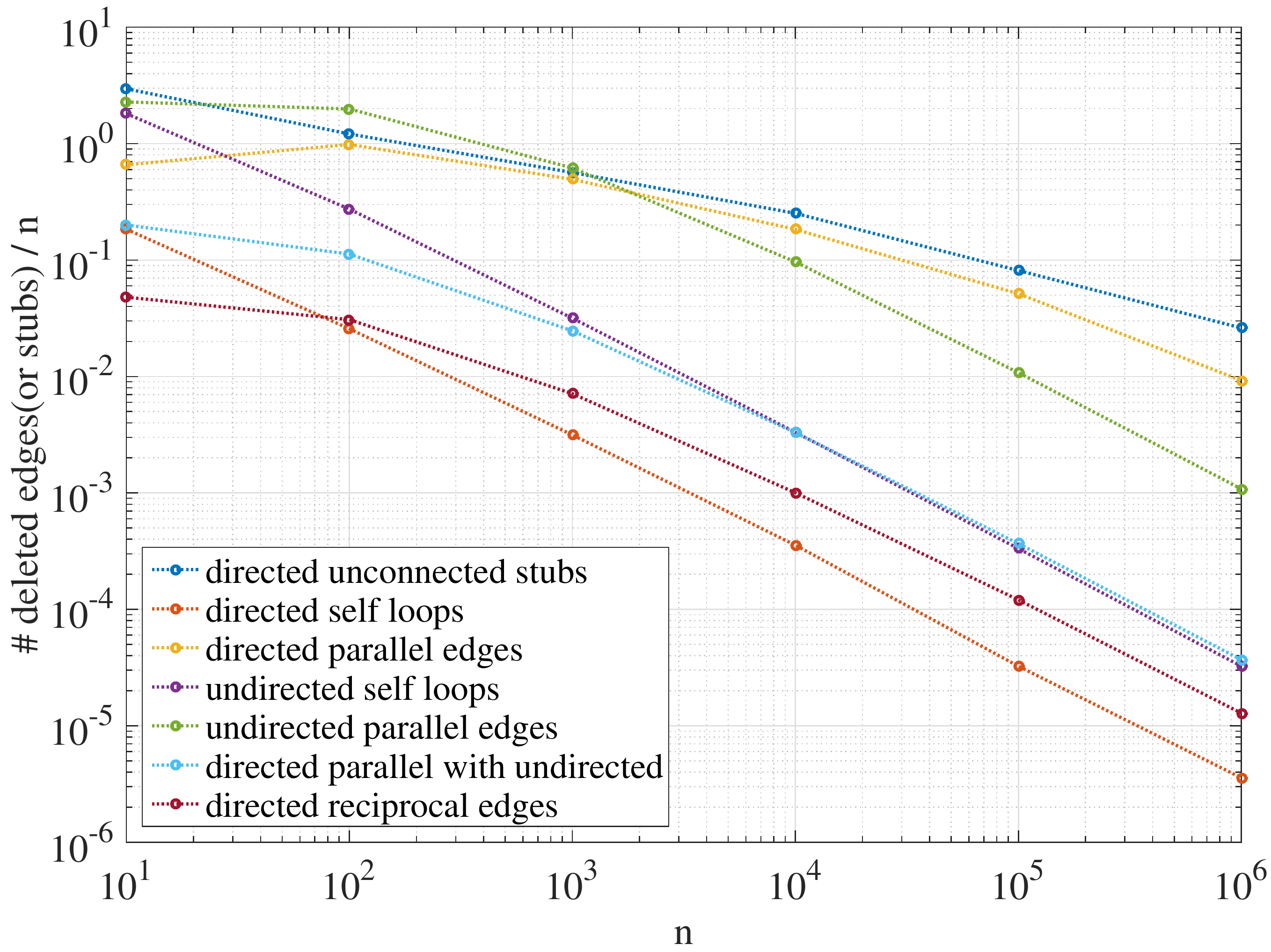}
    \caption{Empirical, independent in-, out- and undirected degrees}
    \label{subfig:relDelEmInd}
  \end{subfigure}
  ~
  \begin{subfigure}{\subgraphWidth\textwidth}
    \includegraphics[width=1\textwidth]{\figpath/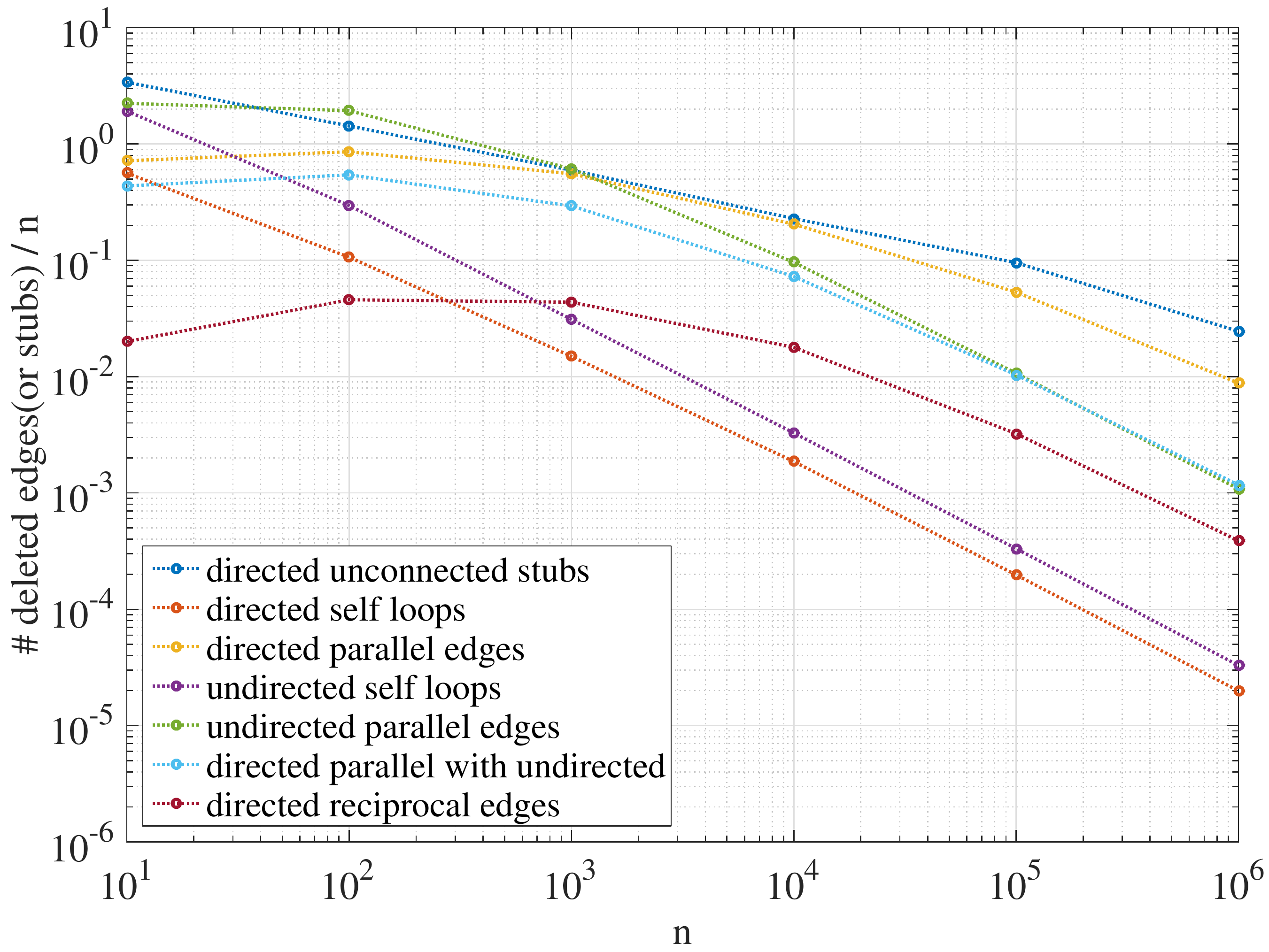}
    \caption{Empirical, dependent in-, out- and undirected degrees}
    \label{subfig:relDelEmDep}
  \end{subfigure}
  ~
  \begin{subfigure}{\subgraphWidth\textwidth}
   \includegraphics[width=1\textwidth]{\figpath/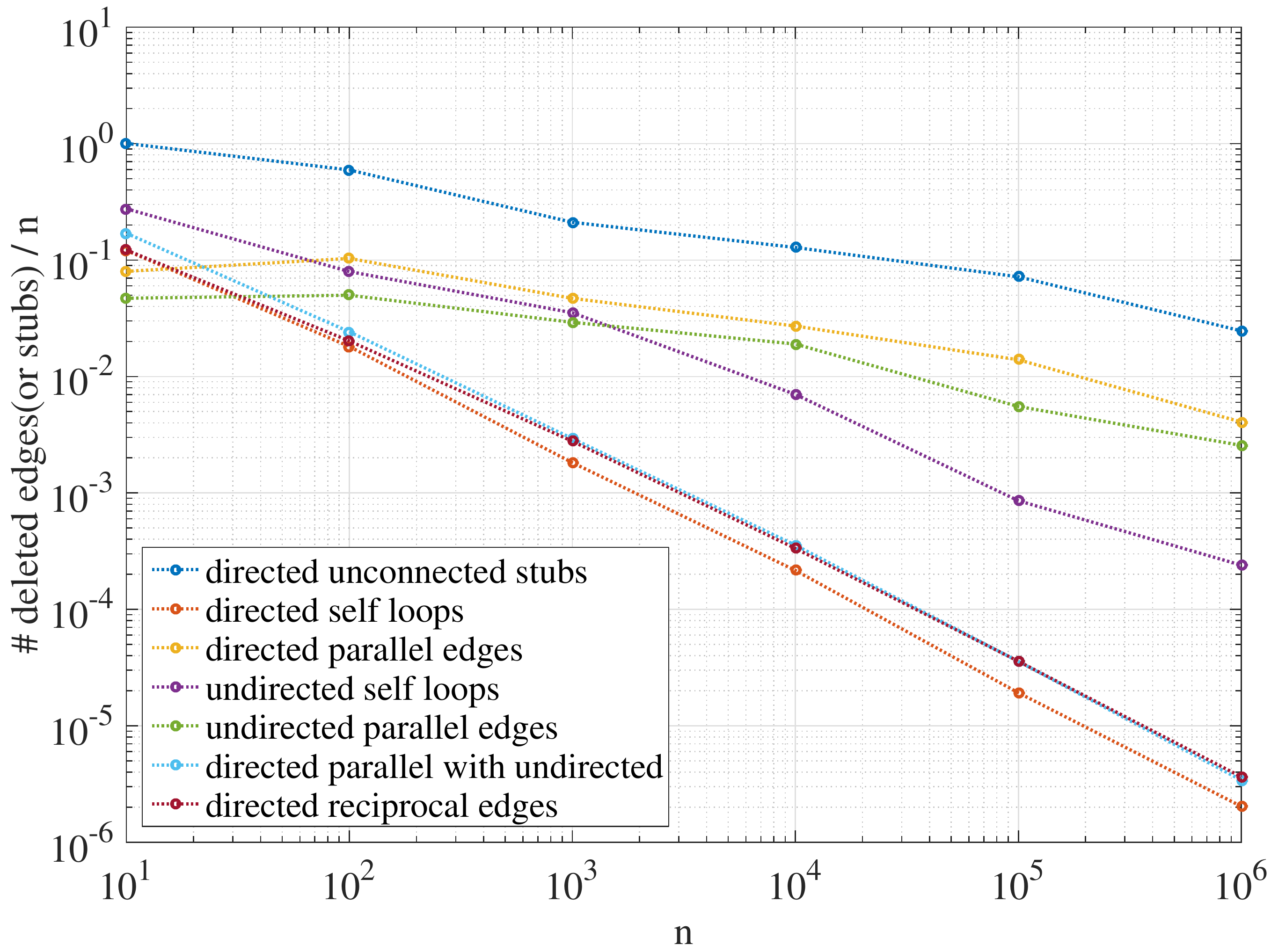}
    \caption{Scale-free, independent in-, out- and undirected degrees}
    \label{subfig:relDelSfInd}
  \end{subfigure}
  ~
  \begin{subfigure}{\subgraphWidth\textwidth}
    \includegraphics[width=1\textwidth]{\figpath/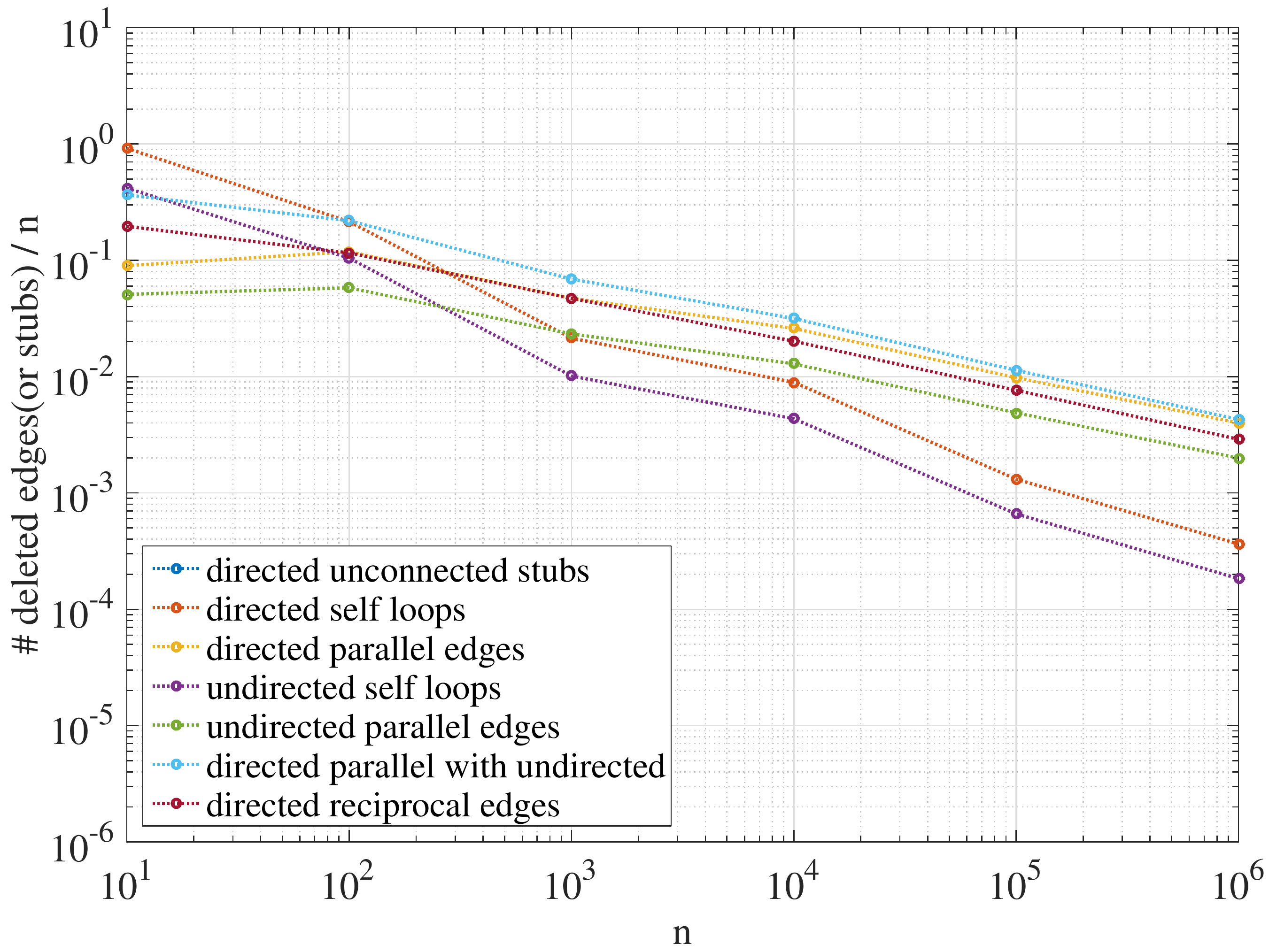}
    \caption{Scale-free, dependent in-, out- and undirected degrees}
    \label{subfig:relDelSfDep}
  \end{subfigure}
 ~
  \begin{subfigure}{\subgraphWidth\textwidth}
    \includegraphics[width=1\textwidth]{\figpath/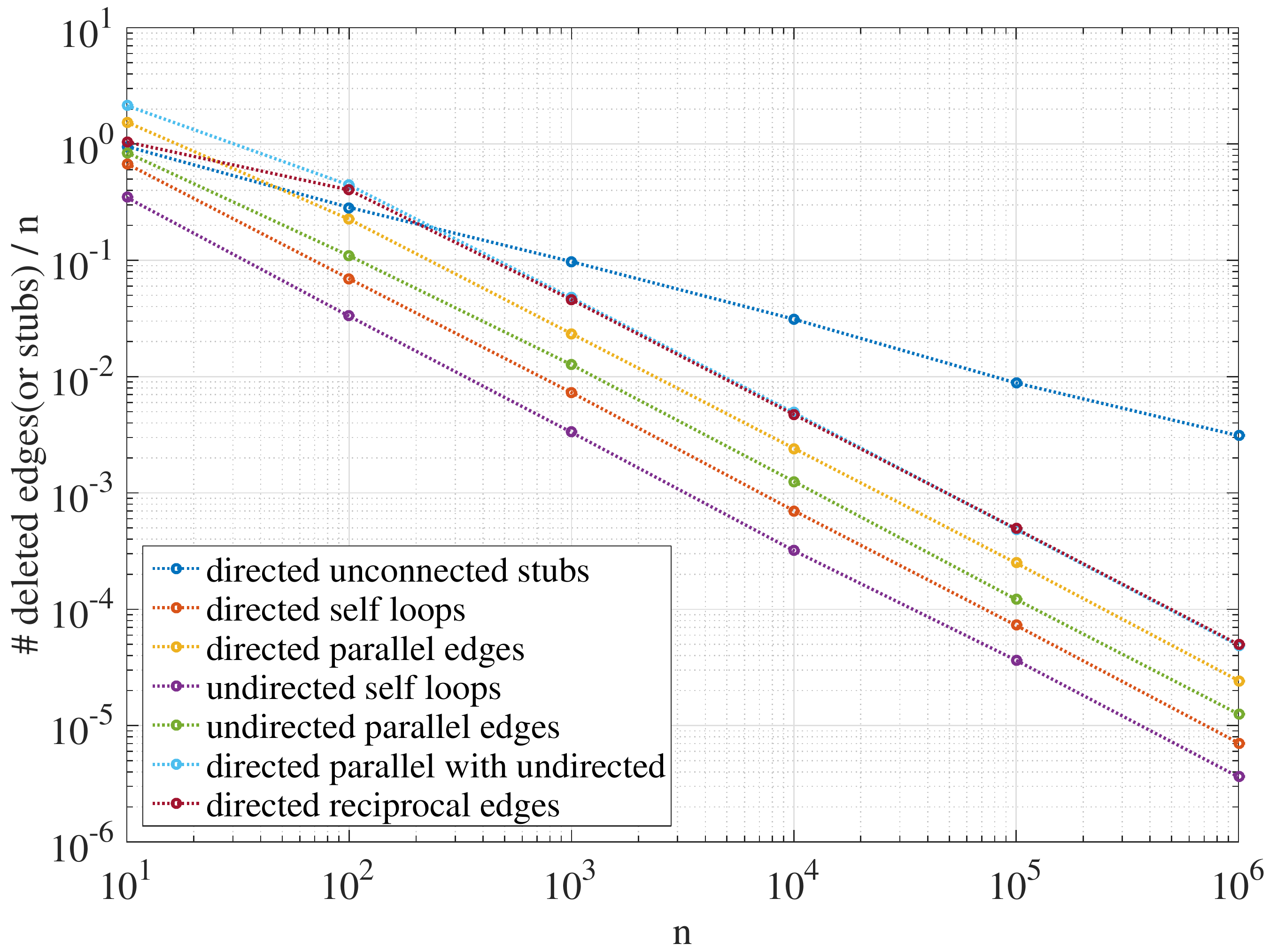}
    \caption{Poisson, independent in-, out- and undirected degrees}
    \label{subfig:relDelPoInd}
  \end{subfigure}
  ~
  \begin{subfigure}{\subgraphWidth\textwidth}
    \includegraphics[width=1\textwidth]{\figpath/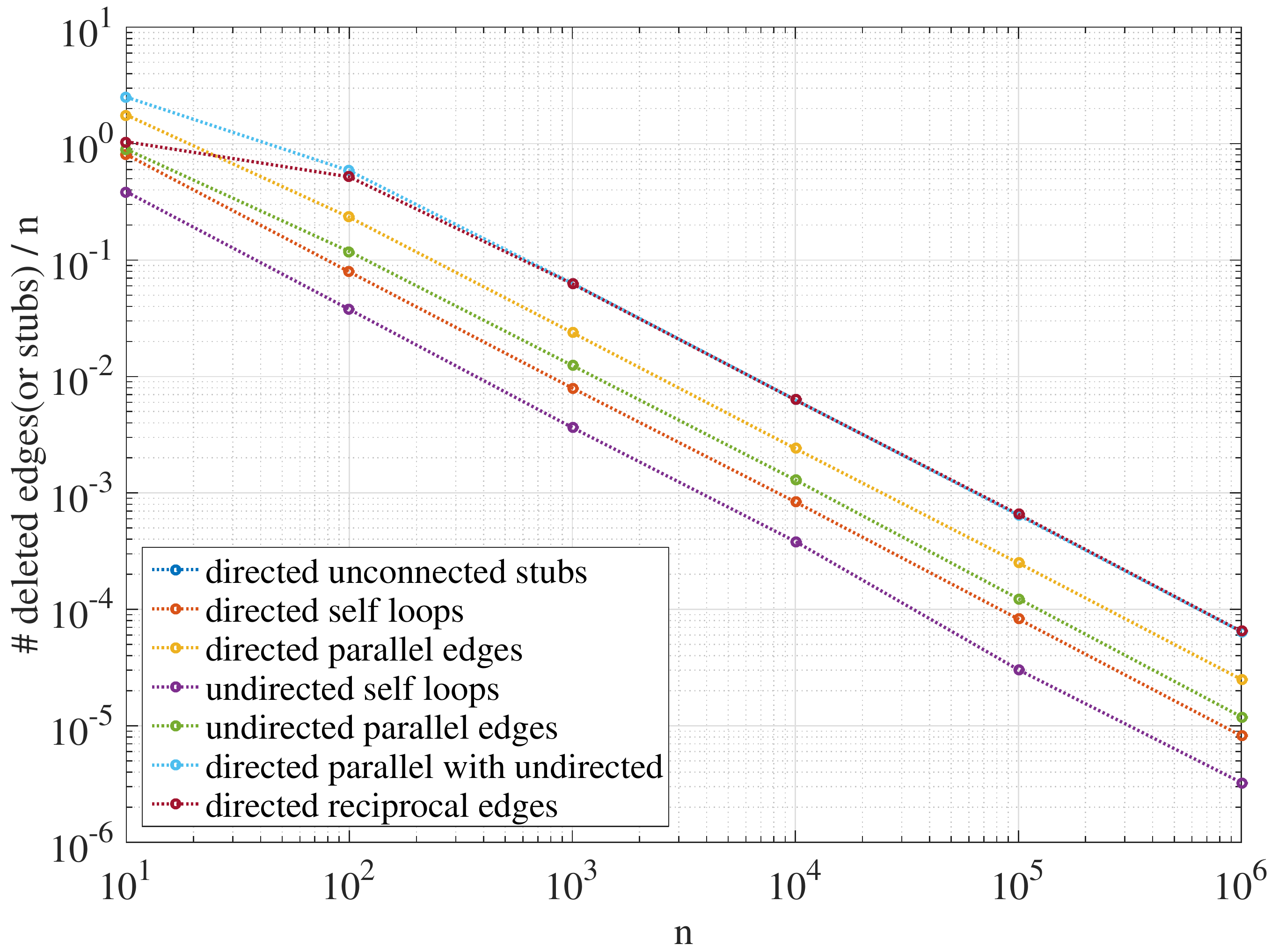}
    \caption{Poisson, dependent in-, out- and undirected degrees}
    \label{subfig:relDelPoDep}
  \end{subfigure}
  \caption{Number of erased edges divided with the number of vertices for the scale-free configuration model with parameter \( \gamma=2.5 \), for the Po(7) model and for the empirical configuration model. Each data point shows the average of 100 simulations.}
  \label{fig:relDelEdges}
\end{figure}

The number of erased edges will depend on the degree distribution, on the graph size and will also be different each time a graph is created according to the configuration model. In \refFig{relDelEdges} the average number of erased edges per vertex were plotted. Each point corresponds to the average of 100 simulations of random graphs according to the partially directed configuration model. The deleted edges were classified as to the reason why they were deleted as defined in the rules in \refSec{creatinggraph}.

For all plots, the graphs indicate that the average number of deleted stubs or edges per vertex decreases with the size of the graph. Thus also the risk of any vertex having its degree affected by the deletion of a stub or an edge goes down and this indicates that the degree distribution \( F^{(n)} \) converges to \( F \) asymptotically. The scale-free distribution is more difficult since for \( \gamma\leq 2 \) neither the variance nor the expectation exist. Here we have selected \( \gamma=2.5 \) for the scale-free graph. This value gives finite expectation, but infinite variance.

As already briefly mentioned in \refSec{totVarDist}, for the scale-free and for the Poisson curves there are no deleted directed stubs for the dependent plots. This is because of how the dependent graphs are created. In these graphs, each vertex has the same number of in-stubs and out-stubs. Thus there will not be any directed stubs left over after the graph has been connected so no such stubs will be deleted. For the empirical graph this is not the case since the dependent version of the graph is created by sampling from the empirical degrees of the vertices, and for these the number of in-stubs in general do not equal the number of out-stubs. In fact we note that the average number of deleted directed stubs per vertex seem to be approximately equal for the directed and the undirected version of the empirical graph, possibly indicating a quite poor correlation between in-stubs and out-stubs in the original graph. Another difference between the graphs is that for the scale-free dependent graph there are many more deleted directed reciprocal edges, deleted directed self loops and deleted directed edges that are parallel with undirected edge, compared with the independent scale-free graph. This can be explained by the heavy tail of the scale-free distribution. For instance assume that some vertex has a very high degree. Since the degrees are dependent (equal, in this case), the risk is much higher that there will be self loops among the directed edges. Also, since the undirected degree will also be high for this vertex, the risk of having directed edges in parallel with the undirected edges also increases. Finally the chance of getting reciprocal directed edges also increases. This risk is high if there are many vertices with high degrees. In the dependent case if two vertices have many in-stubs both will also have many out-stubs, increasing the chance of parallel edges between these.

\subsection{The Strongly Connected Components}

\begin{figure}
  \centering
  \includegraphics[width=0.7\textwidth]{\figpath/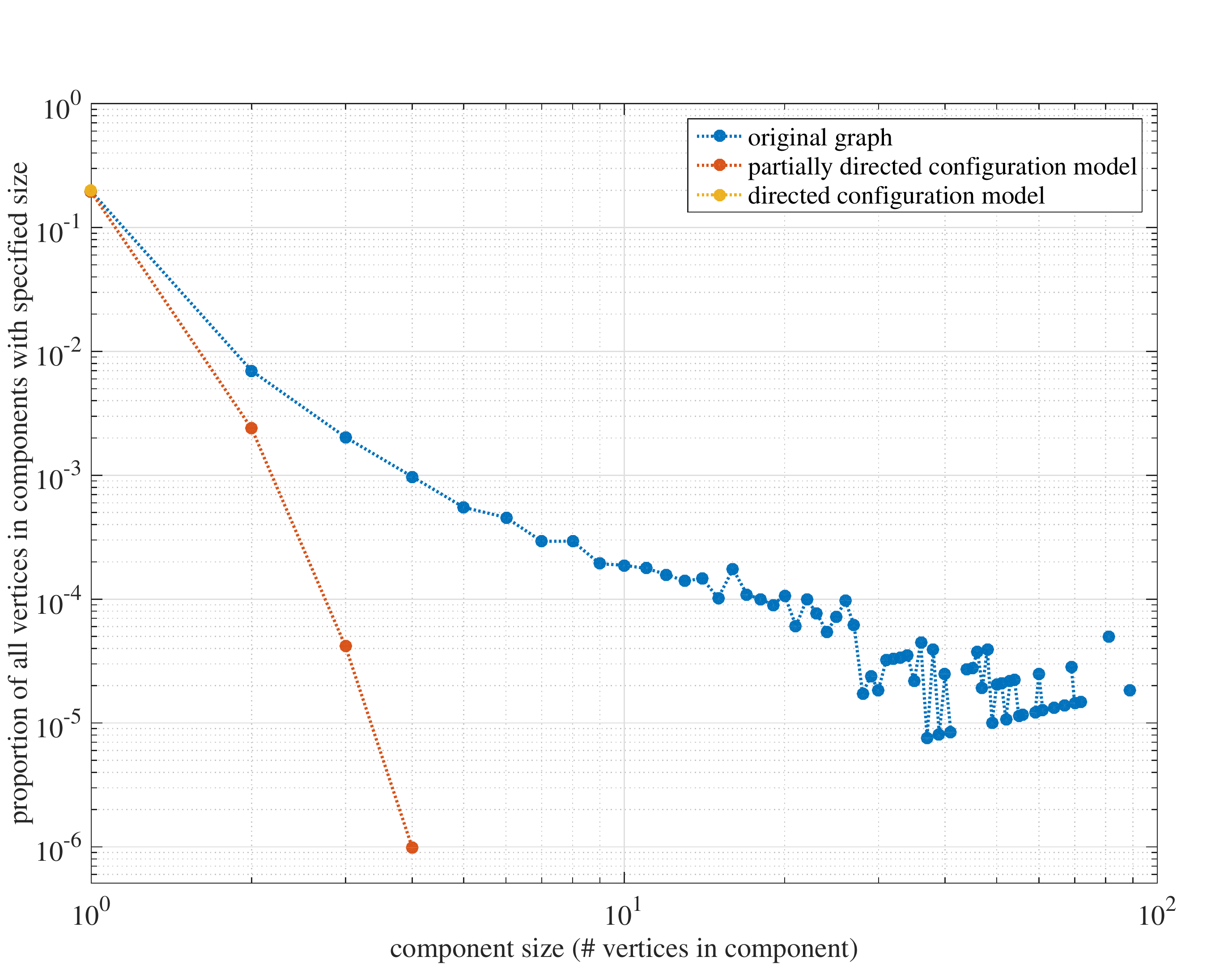}
  \caption{The figure shows the proportion of all the vertices in the graph that belong to strongly connected components other than the largest component. Three plots are made: The first is for the \emph{original empirical graph}, using the connectivity of the original dataset. The second one is for the configuration model for partially directed graphs with the same degree distribution as for the empirical \emph{partially directed} graph. The third one is for the configuration model for directed graphs with the  same degree distribution as the empirical graph, when viewed as a directed graph. The second and third graph are based on averages of 10 simulations - results are similar for each simulation. Note that the third plot consists of only a single point, since for this plot all small components only consist of a single vertex. The total number of vertices in the graph is 4\,847\,571. The relative size of the largest component (not shown in the plot) is 0.7898 for the original graph, 0.8039 for the partially directed configuration model and 0.8026 for the directed configuration model (the last two based on averages of 10 simulations, with the standard deviation being approximately \( 0.0002 \)).}
  \label{fig:components}
\end{figure}

Finally we study the strongly connected components in the original data from LiveJournal, compared with the configuration model based on partially directed stubs and also on directed stubs. For any vertex \( i \) we define the out-component of vertex \( i \) as the set of all vertices that can be reached from vertex \( i \) by following the edges of the graph and respecting how they are directed. In the same way the in-component of vertex \( i \) is defined as the set of all vertices from which we can reach vertex \( i \). The intersection of the out-component and the in-component defines the strongly connected component of vertex \( i \). Any two vertices \( i, j \) where we can reach \( j \) from \( i \) and \( i \) from \( j \) have the same strongly connected component. Thus the graph can be uniquely divided into a set of strongly connected components. Here we study the strongly connected components of the empirical graph and also of configuration model graphs created by using the degree sequence of the empirical graph as the given degree distribution. The largest component in the graph corresponds to the notion of a giant component, the size of which is proportional to the size of the graph. The size of the giant component for these simulations can be compared with theoretical results for a configuration model graph with given degree distribution (see \cite[page 5]{KenahRobins2007}). By plugging in the empirical degree distribution of the LiveJournal dataset, we get that the theoretical size of the giant component 0.8040 for the partially directed graph, and 0.8028 for the directed graph. These values show a good match with the simulation data presented in \refFig{components}.

It is not surprising that the largest component is largest in the configuration model for the partially directed graph. The original empirical graph is likely to have sub-communities that may connect only weakly to other communities, thus reducing the total size of the largest strongly connected component, but of course increasing the number of moderately sized strongly connected components. The directed graph lacks the undirected edges and thus the largest strongly connected component will not include vertices that are connected to it only via a directed edge (in one direction only). Thus its largest strongly connected component will be smaller than for the partially directed graph.

When looking at the variation in size among the medium sized components in \refFig{components}, this is largest for the original empirical graph. For the configuration model on the directed graph \emph{all} other components consist only of single vertices, while for the configuration model on the partially directed graph components of size 1-4 exist. The appearance of some larger small components for the partially directed graph is caused by the undirected edges, compared with only directed edges for the completely directed graph, as was already mentioned above.

\section{Proofs}
\label{sec:proofs}

In this section we provide a proof of \refThm{distribution}. The first part of the proof closely follows \cite{Britton2006}, with modifications for the joint distribution. In \cite{Britton2006} the proof is for the undirected graph, and the addition of the directed edges makes things more complicated. There are mainly two things that need more detailed treatment, the 3-dimensional degree distribution and the fact that combining undirected and directed edges in the same graph creates new reasons for why edges are erased, affecting the empirical degree distribution and thus also, possibly, the asymptotic behavior of it. The first part of the proof, that is similar to \cite{Britton2006} has been moved to two lemmas (\refLem{b-a} and \refLem{sufficient}) to make the part of the proof that is specific for the partially directed configuration model graph more accessible.  A third lemma (\refLem{convergence}) that helps in the final part of the proof of \refThm{distribution} has also been included.

\begin{lemma}
\label{lem:b-a} \( N_{\vd}^{(n)}/n\toinp p_{\vd} \) implies \( F^{(n)}\to F \) as \( n\to\infty \).
\end{lemma}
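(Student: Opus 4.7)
The plan is to upgrade convergence in probability to convergence of expectations via bounded convergence, then deduce convergence of the distribution functions.

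First I would observe that the random variable $N_{\vd}^{(n)}/n$ is uniformly bounded: since at most all $n$ vertices can have degree $\vd$, we have $0 \le N_{\vd}^{(n)}/n \le 1$ deterministically, for every $n$ and every $\vd$. Combined with the hypothesis $N_{\vd}^{(n)}/n \toinp p_{\vd}$, the bounded convergence theorem (equivalently, uniform integrability of a uniformly bounded sequence) yields convergence in $L^1$, and in particular
\begin{equation*}
  p_{\vd}^{(n)} \;=\; \E\!\left[N_{\vd}^{(n)}/n\right] \;\longrightarrow\; p_{\vd} \qquad \text{as } n\to\infty,
\end{equation*}
for every fixed $\vd \in \mathbb{Z}_+^3$.

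Next I would pass from pointwise convergence of the mass functions to convergence of the distribution functions. Since each $p_{\vd}^{(n)}$ and $p_{\vd}$ are probability mass functions on the countable set $\mathbb{Z}_+^3$, and $\sum_{\vd} p_{\vd}^{(n)} = \sum_{\vd} p_{\vd} = 1$, Scheffé's lemma gives convergence in total variation:
\begin{equation*}
  \sum_{\vd} \bigl| p_{\vd}^{(n)} - p_{\vd} \bigr| \;\longrightarrow\; 0.
\end{equation*}
In particular, for any rectangle $\{\vd : \din \le i,\ \dou \le j,\ \dun \le k\}$, summing the mass function over this (possibly infinite) set and applying the dominated convergence / total variation bound gives $F^{(n)}(i,j,k) \to F(i,j,k)$. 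This is the desired conclusion $F^{(n)} \to F$.

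There is no real obstacle here; the content of the lemma is essentially the observation that $N_{\vd}^{(n)}/n$ is a $[0,1]$-valued random variable, so moment considerations needed for a general uniform integrability argument are trivial. The only point requiring any care is passing from pointwise convergence of the three-dimensional mass functions to the distribution functions, which is handled cleanly by Scheffé.
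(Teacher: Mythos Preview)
Your proof is correct and follows essentially the same route as the paper: bound $N_{\vd}^{(n)}/n$ in $[0,1]$, apply bounded convergence to get $p_{\vd}^{(n)}\to p_{\vd}$, then conclude $F^{(n)}\to F$. The only difference is that you invoke Scheff\'e's lemma for the last step, which is more than needed---since degrees lie in $\mathbb{Z}_+^3$, the set $\{\vd:\din\le i,\,\dou\le j,\,\dun\le k\}$ is finite (not ``possibly infinite''), so $F^{(n)}(i,j,k)\to F(i,j,k)$ follows immediately from a finite sum of convergent terms.
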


\begin{proof}{~}
  \begin{enumerate}[i)]
    \item \( N_{\vd}^{(n)}/n \toinp p_{\vd} \)  and \( 0\leq N_{\vd}^{(n)}/n\leq 1 \) imply \( \E\left[N_{\vd}^{(n)}/n\right]\to p_{\vd} \), by bounded convergence \cite[page 180]{Grimmet}.
      
    \item  \( \E\left[N_{\vd}^{(n)}/n\right]=p_{\vd}^{(n)} \) implies \( p_{\vd}^{(n)}\to p_{\vd}\:\forall\: \vd \)
      
    \item Since (ii) is valid for any \( \vd \) we have \( F^{(n)}\to F \) as \( n\to\infty \).
  \end{enumerate}
\end{proof}

In \refLem{sufficient} we need a few definitions that are used both in the lemma and in the proof of it. Let \( \Mr \) be an indicator variable that shows if vertex \( r \) has had its degree modified in the process of creating a simple configuration model graph of size \( n \). The total number of modified vertices can then be calculated by summing all of these and we define \( \Mn=\sum_{r=1}^{n}{\Mr} \).

\begin{lemma}
\label{lem:sufficient} \noindent If \( \Pr\left(\Mr{=}0\,|\,\vD_{r}{=}(\din,\dou,\dun)\right)\to 1 \;\forall\, \din,\dou,\dun \) and for arbitrary \( r \), then \( N_{\vd}^{(n)}/n \toinp p_{\vd} \)  as \( n\to\infty \).
\end{lemma}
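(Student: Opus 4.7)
The plan is to decompose $N_{\vec{d}}^{(n)}$ into the count of vertices whose drawn degree equals $\vec{d}$ and were never modified, plus a small-order correction coming from modifications. Let $\widetilde{N}_{\vec{d}}^{(n)} = \sum_{r=1}^{n} \mathbbm{1}\{\vec{D}_r = \vec{d}\}$ be the number of vertices \emph{originally} assigned degree $\vec{d}$. Since a vertex contributes to $N_{\vec{d}}^{(n)}$ but not to $\widetilde{N}_{\vec{d}}^{(n)}$ only if it was modified, and vice versa, one has the deterministic bound
\[
\bigl| N_{\vec{d}}^{(n)} - \widetilde{N}_{\vec{d}}^{(n)} \bigr| \;\leq\; M^{(n)} \;=\; \sum_{r=1}^{n} M_r^{(n)}.
\]
So the whole game reduces to showing (i) $\widetilde{N}_{\vec{d}}^{(n)}/n \to p_{\vec{d}}$ and (ii) $M^{(n)}/n \xrightarrow{P} 0$.

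For (i), the degrees $\vec{D}_1,\dots,\vec{D}_n$ are i.i.d. with distribution $F$, so $\mathbbm{1}\{\vec{D}_r = \vec{d}\}$ are i.i.d. Bernoulli$(p_{\vec{d}})$, and the strong (hence weak) law of large numbers gives $\widetilde{N}_{\vec{d}}^{(n)}/n \to p_{\vec{d}}$ almost surely.

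For (ii), I would use the hypothesis together with bounded/dominated convergence and Markov's inequality. By symmetry of the construction (the degrees are i.i.d.\ and the random pairing of stubs is exchangeable across vertices), the marginal law of $M_r^{(n)}$ does not depend on $r$, so
\[
\E\!\left[\frac{M^{(n)}}{n}\right] \;=\; \Pr\!\left(M_1^{(n)} = 1\right) \;=\; \sum_{\vec{d}} p_{\vec{d}}\,\bigl(1 - \Pr(M_1^{(n)} = 0 \mid \vec{D}_1 = \vec{d})\bigr).
\]
Each summand is bounded by $p_{\vec{d}}$, which is summable, and by hypothesis each summand tends to $0$. Dominated convergence gives $\E[M^{(n)}/n] \to 0$, so by Markov $M^{(n)}/n \xrightarrow{P} 0$.

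Finally, combining (i) and (ii), for any $\varepsilon > 0$,
\[
\Pr\!\left(\bigl|N_{\vec{d}}^{(n)}/n - p_{\vec{d}}\bigr| > \varepsilon\right) \;\leq\; \Pr\!\left(\bigl|\widetilde{N}_{\vec{d}}^{(n)}/n - p_{\vec{d}}\bigr| > \varepsilon/2\right) + \Pr\!\left(M^{(n)}/n > \varepsilon/2\right),
\]
and both terms on the right vanish as $n \to \infty$. The only subtle step is the exchange of limit and sum in the dominated convergence argument, which is fine because the conditional probabilities are uniformly bounded by $1$ and are weighted by the probability masses $p_{\vec{d}}$. The heart of the work for \refThm{distribution} is therefore not in this lemma but in verifying that the hypothesis actually holds for the partially directed configuration model — controlling the conditional modification probability is where the combinatorics of the five erasure rules in \refSec{creatinggraph} enters, and that is left to the subsequent argument.
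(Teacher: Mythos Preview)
Your proposal is correct and follows essentially the same route as the paper: introduce the pre-erasure count $\widetilde{N}_{\vd}^{(n)}$, use the law of large numbers for it, bound $|N_{\vd}^{(n)}-\widetilde{N}_{\vd}^{(n)}|$ by $M^{(n)}$, exploit exchangeability to reduce $\E[M^{(n)}/n]$ to $\Pr(M_1^{(n)}=1)$, condition on $\vD_1$, and pass to the limit termwise (dominated convergence) before applying Markov's inequality. The only cosmetic difference is that you spell out the dominated-convergence step and the final $\varepsilon/2$ split explicitly, whereas the paper leaves these implicit.
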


\begin{proof}~
  \begin{enumerate}[i)]
    \item Let \( \Nwt_{\vd}^{(n)} \) be the number of vertices with degree \( \vd \) before any stub has been erased or added. By the law of large numbers we have that \( \Nwt_{\vd}^{(n)}/n\toas p_{\vd} \) as \( n\to\infty \). Since we want to show that \( N_{\vd}^{(n)}/n \toinp p_{\vd} \) it is enough to show that \( \left(\Nwt_{\vd}^{(n)}{-}N_{\vd}^{(n)}\right)/n\toinp 0 \) as \( n\to\infty \).
      
    \item We note that modifying the degree of a vertex affects not only the number of vertices with the original degree, but also the number of vertices with the new degree, thus \( \Nwt_{\vd}^{(n)} \) can be less than \( N_{\vd}^{(n)} \). However, we can still be sure that \( \left|\Nwt_{\vd}^{(n)}{-}N_{\vd}^{(n)}\right|\leq \Mn \). We wish to show that \( \Mn/n\toinp 0 \), i.e. that \( \Pr\left(\left|\Mn/n\right|>\epsilon\right)\to 0 \) as \( n\to\infty, \;\forall\,\epsilon>0 \).
      
    \item Using Markov's inequality and that \( \Mn\geq 0 \) we get
    \begin{equation}
      \Pr\left(\left|\Mn/n\right|{>}\epsilon\right)\leq\frac{\E\left[\Mn/n\right]}{\epsilon};\forall\epsilon>0.
    \end{equation}
    Thus it is enough to show that \( \E\left[\Mn/n\right]\to 0 \).
      
    \item The \( \{\Mr\} \) are identically distributed since the numbering of the vertices is arbitrary and so \( \E[\Mn/n]=\E\left[\Mone\right]=\Pr\left(\Mone{=}1\right) \), where vertex 1 has been chosen arbitrarily. We want to show that \( \Pr\left(\Mone{=}1\right)\to 0 \) or, equivalently, that \( \Pr\left(\Mone{=}0\right)\to 1 \) as \( n\to\infty \).
      
    \item Conditioning on the degree of vertex 1 gives
    \begin{equation}
      \Pr\left(\Mone{=}0\right){=}\sum_{\din\dou\dun}{\Pr\left(\Mone{=}0\,|\,\vD_{1}{=}(\din,\dou,\dun)\right)\Pr\left(\vD_{1}{=}(\din,\dou,\dun)\right)}
    \end{equation}     
    Since we know
    \begin{equation}
      \sum\limits_{\din\dou\dun}{\Pr\left(\vD_{1}{=}(\din,\dou,\dun)\right)}=\sum\limits_{\din\dou\dun}{p_{\din\dou\dun}}=1 
    \end{equation}
    it is enough to show that
    \begin{equation}
      \Pr\left(\Mone{=}0\,|\,\vD_{1}{=}(\din,\dou,\dun)\right)\to 1 \;\forall\; \din,\dou,\dun \text{ as } n\to\infty.
    \end{equation}
    
  \end{enumerate}
\end{proof}

\begin{lemma}
\label{lem:convergence} Let \( \{X_m\} \) be a sequence of non-negative random variables and let \( X \) be a non-negative random variable. Also let \( 0\leq a<\infty \) be a real number.
  
  \noindent If \( X_m\toind X\text{ as } m\to\infty \), \( \lim\limits_{m\to\infty}\E[X_m]\leq a \) and \( \E[X]=a \), then \( \lim\limits_{m\to\infty}\E[X_m]=a \).
\end{lemma}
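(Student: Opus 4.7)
The plan is to establish the matching lower bound $\liminf_{m\to\infty}\E[X_m] \geq a$, so that combined with the hypothesized upper bound the limit must equal $a$. Once both inequalities are in hand the conclusion is purely arithmetic.

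First I would invoke Skorokhod's representation theorem to replace convergence in distribution by almost sure convergence. Since $X_m \toind X$ on $\mathbb{R}$, there exist random variables $Y_m$ and $Y$ on a common probability space such that $Y_m \eqind X_m$, $Y \eqind X$, and $Y_m \to Y$ almost surely. Because each $Y_m$ is non-negative a.s., Fatou's lemma applies and yields
\[ \E[X] \;=\; \E[Y] \;\leq\; \liminf_{m\to\infty}\E[Y_m] \;=\; \liminf_{m\to\infty}\E[X_m]. \]

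Chaining this with the hypotheses $\E[X]=a$ and $\lim_{m\to\infty}\E[X_m]\leq a$ (which in particular gives $\limsup_{m\to\infty}\E[X_m]\leq a$) produces the sandwich
\[ a \;=\; \E[X] \;\leq\; \liminf_{m\to\infty}\E[X_m] \;\leq\; \limsup_{m\to\infty}\E[X_m] \;\leq\; a, \]
so every inequality is an equality and $\lim_{m\to\infty}\E[X_m]=a$, as required.

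The main technical step is the lower-semicontinuity inequality $\liminf\E[X_m]\geq \E[X]$; the only subtlety is that Fatou requires pointwise convergence while the hypothesis supplies only convergence in distribution, and this gap is bridged cleanly by Skorokhod's representation theorem thanks to non-negativity of the $X_m$. Everything else is routine.
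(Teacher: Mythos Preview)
Your proof is correct and follows essentially the same approach as the paper: both invoke Skorokhod's representation theorem to upgrade convergence in distribution to almost sure convergence, then apply Fatou's lemma to obtain the lower bound $\liminf_{m\to\infty}\E[X_m]\geq\E[X]=a$, which together with the hypothesized upper bound forces the limit to equal $a$. Your handling of the sandwich via $\liminf$ and $\limsup$ is slightly more explicit than the paper's, but the argument is the same.
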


\begin{proof}~
  \noindent For non-negative random variables \( \{Y_m\} \) Fatou's lemma states 
  \begin{equation}
    \E\left[\liminf_{m\to\infty} Y_m\right]\leq \liminf_{m\to\infty}\E\left[Y_m\right].
  \end{equation}
  
  \noindent We apply Skorokhod's representation theorem and can thus define \( \{Y_m\} \) and \( Y \) (all on the same probability space) to have the same distribution as \( \{X_m\} \) and \( X \), and \( Y_m\toas Y \) as \( m\to\infty \)
  
  \noindent Developing the left and right hand side of Fatou's Lemma now gives:
  
  \begin{equation}
    \text{LHS} = \E\left[ \liminf_{m\to\infty} Y_m \right] =\E[Y]=\E[X]=a,
  \end{equation}
  \begin{equation}
    \text{RHS} = \liminf_{m\to\infty}\E[Y_m]\leq\lim_{n\to\infty}\E[Y_m]=\lim_{m\to\infty}\E[X_m]\leq a.
  \end{equation}
  Thus
  \begin{equation}
    \lim\limits_{m\to\infty}\E[X_m]=a
  \end{equation}
\end{proof}

Now we are ready to prove the main theorem.

\begin{proof}[of \refThm{distribution}]~
  \begin{enumerate}
    \item  \refLem{b-a} shows that \refThm{distribution} \emph{(b)} implies \emph{(a)}.

    \item It remains to prove \refThm{distribution} \emph{(b)}. \refLem{sufficient} simplifies this process.
    
    Let \( \Mone \) be the \emph{indicator} variable for the event that a specific vertex (arbitrarily selected to be vertex 1) has had its degree \emph{modified} when creating a simple configuration model graph of size \( n \) according to the procedure defined in \refSec{creatinggraph}. Also let the degree of vertex 1 be \( \vD_{1}=\vd=(\din,\dou,\dun) \). According to \refLem{sufficient}, in order to prove (b) it is sufficient to show that 
    \begin{equation}
      \Pr\left( \Mone{=}0 \,|\, \vD_{1}{=}\vd \right) \to 1 \;\forall\, \vd \text{ as } n\to\infty
    \end{equation}
    
    \item Remembering that we do not allow self loops or parallel edges, \( \Mone=0 \) exactly when each stub from vertex 1 is saved. In total, vertex 1 has \( d=\din{+}\dou{+}\dun \) stubs and these are all saved only when all of them successfully attach to other matching stubs, all from different vertices selected from vertices \( \{2,...,n\} \). In all other cases the degree of vertex 1 will surely be modified, giving no contribution to the probability of \( \Mone{=}0 \).
    
    Now, if we knew the degrees of all the vertices, it would be easy to calculate the probability of \( \Mone{=}0 \). We do this simply by considering \emph{all} events where the stubs of vertex 1 connect to different vertices and then sum all the probabilities of these events. It is thus natural to continue the proof by conditioning on the degrees of vertices \( \{2,...,n\} \). Let the degrees of vertices \( \{2,...,n\} \) be  \( \bD^{(n)}=\{\vD_{2},...,\vD_{n}\} \), where the \( \vD_{r} \) are i.i.d. from \( F \). Then we want to study
    \begin{equation}
      \Pr\left(\Mone{=}0\,|\,\vD_{1}{=}\vd\right)= \E\left[ \Pr\left(\Mone{=}0\,|\,\vD_{1}{=}\vd, \bD^{(n)}\right) \right].
      \label{eqn:saveProb}
    \end{equation}
    
    \item We now look more closely at the conditional probability
    \begin{equation}
      \Pr\left(\Mone{=}0\,|\,\vD_{1}{=}\vd, \bD^{(n)}=\bd^{(n)}\right),
      \label{eqn:condProb}
    \end{equation}

    where \( \bD^{(n)} = \bd^{(n)} = \{\vd_{2},...,\vd_{n}\} \) is a specific outcome of the degrees of the vertices. From this we see that the total number of stubs of each type are \( \sin = \sum\limits_{r=1}^{n}\din_{r} \), \( \sou = \sum\limits_{r=1}^{n}\dou_{r} \) and \( \sun = \sum\limits_{r=1}^{n}\dun_{r} \). We want to know where each stub of vertex 1 \emph{attempts} to connect and define a set set of indices, \( \bi=\{i_{1},...,i_{\din}\} \), \( \bj=\{j_{1},...,j_{\dou}\} \) and \( \bk=\{k_{1},...,k_{\dun}\} \). Any set of values of these indices we call a \emph{save-attempt}, indicating that we try to save all stubs of vertex 1 from being erased, by attempting to connect the stubs of vertex 1 to matching stubs from the vertices pointed to by these indices.
    
    Given the degrees of all vertices we can calculate the probability of any such \emph{save-attempt}. First some basic observations:
    \begin{enumerate}
      \item If any one of the selected vertices do not have a matching stub the probability of the \emph{save-attempt} is zero. As an example, assume that an in-stub attempts to connect to vertex 2, but vertex 2 does not have any out-stub at all. Then this event will have probability zero.
      \item As a consequence, for the \emph{save-attempt} to have a probability larger than zero, \emph{all} the vertices that the stubs of vertex 1 attempt to connect to must have matching stubs.
    \end{enumerate}
    
    As an example, take a look at the \emph{save-attempt} where each stub of vertex 1 tries to connect to the other vertices in order. The indices then take on the values \( \{i_{1}=2, i_{2}=3,...,k_{\dun-1}=d, k_{\dun}=d+1\} \). For now, we ignore the probability that there may not be enough matching stubs of vertices \( \{2, ..., n\} \) to accommodate all the stubs of vertex 1. We do this now to make the main argument clearer, but we correct the equations for this special case later in the proof.
    
    First we look at in-stub 1 from vertex 1. Since we are working with the configuration model, this stub has an equal chance of connecting to any of the matching stubs. Thus the probability of in-stub 1 from vertex 1 to connect to any of the out-stubs from vertex 2 is
    \begin{equation}
      \frac{\dou_{2}}{\sou}. \end{equation}
    
    Once in-stub 1 of vertex 1 has connected to vertex 2 we continue with in-stub 2 of vertex 1. Once again the configuration model tells us that this stub has an equal chance of connecting to any of the remaining matching stubs. Thus the probability of it connecting to any of the out-stubs from vertex 3 is
    \begin{equation}
      \frac{\dou_{3}}{\sou-1}. \end{equation}
    
    We can continue in the same way with the rest of the in-stubs, then the out-stubs and finally the undirected stubs of vertex 1. For the undirected stubs we note that we need to subtract 2 stubs every time we connect one stub, since the undirected stubs connect to other undirected stubs.
    
    Now we can calculate the probability of this specific \emph{save-attempt} and find that it is
    \begin{equation}
      \prod_{r=1}^{\din}\left(\frac{\dou_{i_{r}}}{\sou{-}r{+}1}\right)
      \prod_{r=1}^{\dou}\left(\frac{\din_{j_{r}}}{\sin{-}r{+}1}\right)
      \prod_{r=1}^{\dun}\left(\frac{\dun_{k_{r}}}{\sun{-}2r{+}1}\right)
      \label{eqn:specProb}
    \end{equation}

    In the expression we have ignored that we we have already used up \( \din \) out-stubs when connecting the in-stubs of vertex 1. We correct for this in the final expressions given later in the proof.
    
    Here we explicitly see that this expression is equal to zero iff any one of the degrees in the numerator is zero. Otherwise it will be positive, but always less than or equal to 1. 
    
    To shorten the expressions we will call each of the three parts of \refEqn{specProb} \( \qin \), \( \qou \) and \( \qun \), respectively, where the arrow indicates what type of stub in vertex 1 we are dealing with.
    
    Now we are ready to write down the expression for the conditional probability in \refEqn{condProb} We need to sum \refEqn{specProb} over \emph{all} values of \( \bi \), \( \bj \) and \( \bk \), such that \emph{all} sub-indices are different - pointing to different vertices. We arrive at
    \begin{equation}
      \Pr\left(\Mone{=}0\,|\,\vD_{1}{=}\vd, \bD^{(n)}=\bd^{(n)}\right) =
      \sum_{\substack{\bi,\, \bj,\, \bk \\ \text{all sub-indices different}}}{\qin\qou\qun}.
      \label{eqn:sumCondProb}
    \end{equation}

    The number of terms in the sum will be \( (n-1)(n-2)\cdot...\cdot(n-d) \), which is simply the number of different ways in which we can select the \( d \) indices out of the \( n-1 \) possible vertices. Note that these combinations of indices include the ones we are interested in, where all stubs of vertex 1 are saved. Note also that the sum includes some combinations that we are not interested in, but all of these have probability zero and so it does not matter if we include them in the sum or not.
    
    \item We now need to deal with a few complications that will lead to corrections to \( \qin \), \( \qou \) and \( \qun \).
    \begin{enumerate}
      \item If the number of stubs of vertex 1 (\( d \)) is larger than the number of available nodes (\( n-1 \)), then it is not possible to select all sub-indices indices different. However, since \( d \) is fixed, this is always resolved as \( n\to\infty \). In the following we will always assume that \( n\geq d \).
      
      \item There may be a mismatch in the number of stubs. If the number of undirected stubs is odd, there will be one extra stub. Let \( \vn \) be the number of such stubs. Clearly \( \vn \) can only be 0 or 1.
    
      In the same way the number of in-stubs may differ from the number of out-stubs. Let \( \wn=\sin{-}\sou \), the difference between the number of in-stubs and the number of out-stubs. Clearly \( \wn \) can be negative, zero or positive. If \( \vn \) or \( \wn \) are not zero then some stubs will remain unconnected.
    
      In the following we will deal with both of these by imagining two extra \emph{pools} of edges each of size \( \vn \) and \( |\wn| \), respectively. These pools behave just as any normal vertex and any stub has an equal probability to connect to any allowed stub, including these two pools. They are thus added to the denominators in \refEqn{specProb}
    
      \item As mentioned before, we have included some events that have probability zero in the sum. Although the nominator is always zero for these, in some special cases the denominator may also become zero. This happens when there are not enough matching stubs to accommodate all the stubs of vertex 1. Of course we could define \( 0/0 := 0) \), but here we instead chose to correct the denominator so that it does not become zero. We do this correction by adding an extra indicator variable to the denominator. Whenever this happens, the nominator is still zero, so the sum is not changed.
    \end{enumerate}
    The corrected versions of \( \qin \), \( \qou \) and \( \qun \) are thus
    \begin{eqnarray}
      \qin&=&
        \prod_{r=1}^{\din}
        \frac{\dou_{i_{r}}}{\sou{-}r{+}1+\wn\One_{\left\{\wn>0\right\}}+\din\One_{\left\{\din>\sou\right\}}}
      \\
      \qou&=&
        \prod_{r=1}^{\dou}
        \frac{\din_{j_{r}}}{\sin{-}\din{-}r{+}1-\wn\One_{\left\{\wn<0\right\}}+(\dou{+}\din)\One_{\left\{\dou>\sin{-}\din\right\}}}
      \\
      \qun&=&
        \prod_{r=1}^{\dun}
        \frac{\dun_{k_{r}}}{\sun{-}2r{+}1+\vn+2\dun\One_{\left\{2\dun>\sun\right\}}}
      \label{eqn:corrSpecProb}
    \end{eqnarray}
    
    \item To be able to obtain an expression for the probability in \refEqn{condProb} we need to replace the degrees in \refEqn{corrSpecProb} with their stochastic counterpart to obtain
    \begin{equation}
      \Pr\left(\Mone{=}0\,|\,\vD_{1}{=}\vd, \bD^{(n)}\right) =
      \sum_{\substack{\bi,\, \bj,\, \bk \\ \text{all sub-indices different}}}{\Qin\Qou\Qun},
      \label{eqn:stocCondProb}.
    \end{equation}
    where
    \begin{eqnarray}
      \Qin&=&
        \prod_{r=1}^{\din}
        \frac{\Dou_{i_{r}}}{\Sou{-}r{+}1+\Wn\One_{\left\{\Wn>0\right\}}+\din\One_{\left\{\din>\Sou\right\}}}
      \\
      \Qou&=&
        \prod_{r=1}^{\dou}
        \frac{\Din_{j_{r}}}{\Sin{-}\din{-}r{+}1-\Wn\One_{\left\{\Wn<0\right\}}+(\dou{+}\din)\One_{\left\{\dou>\Sin{-}\din\right\}}}
      \\
      \Qun&=&
        \prod_{r=1}^{\dun}
        \frac{\Dun_{k_{r}}}{\Sun{-}2r{+}1+\Vn+2\dun\One_{\left\{2\dun>\Sun\right\}}}
    \end{eqnarray}
    Here the uppercase variables are all the stochastic counterparts of the lowercase variables defined previously.
    
    \item Now we can continue with \refEqn{saveProb}:
    \begin{align}
      \Pr\left(\Mone{=}0\,|\,\vD_{1}{=}\vd\right) 
      &= \E\left[ \Pr\left(\Mone{=}0\,|\,\vD_{1}{=}\vd, \bD^{(n)}\right) \right]
      \\
      &= \E\left[ \sum_{\substack{\bi,\, \bj,\, \bk \\ \text{all sub-indices different}}}{\Qin\Qou\Qun} \right]
      \\
      &= \sum_{\substack{\bi,\, \bj,\, \bk \\ \text{all sub-indices different}}}{\E\left[ \Qin\Qou\Qun \right]}
      \\
      &= (n-1)(n-2)\cdot...\cdot(n-d)\E\left[ \Qin\Qou\Qun \right]
      \\
      &= \frac{(n-1)\cdot...\cdot(n-d)}{n^{d}}\E\left[n^{d}\Qin\Qou\Qun \right]
      \\
      &= c^{(n)}
      \E\left[ \left(n^{\din}\Qin\right)
               \left(n^{\dou}\Qou\right)
               \left(n^{\dun}\Qun\right) \right].
      \label{eqn:condProbFinal}
    \end{align}
    
    \noindent\emph{Note 1:} The expectation and the summation can be interchanged since all terms are non-negative and since the summation does not depend on any random quantity (as mentioned before).

    \noindent\emph{Note 2:} Since vertex degrees are drawn independently at random, all expectation terms in the sum are identical and we simply take the number of terms times the expectation of one of the terms instead of the sum. The number of terms was already discussed above.

    \noindent\emph{Note 3:} \( c^{(n)} = \frac{(n-1)\cdot...\cdot(n-d)}{n^{d}} \).

    \item All that remains is to take the limit of \refEqn{condProbFinal}. We start by studying the limit of what is inside the expectation. Rewriting the first term we get
    \begin{align}
       n^{\din}\Qin 
       &= n^{\din}\prod_{r=1}^{\din}
       \frac{\Dou_{i_{r}}}{\Sou{-}r{+}1+\Wn\One_{\left\{\Wn>0\right\}}+\din\One_{\left\{\din>\Sou\right\}}}
       \\
       &= \prod_{r=1}^{\din}
       \frac{\Dou_{i_{r}}}{\frac{\Sou}{n}{-}
       \frac{r}{n}{+}
       \frac{1}{n}+\frac{\Wn}{n}\One_{\left\{\Wn>0\right\}}+\frac{\din}{n}\One_{\left\{\din>\Sou\right\}}}
    \end{align}
    
    The remaining outgoing and undirected terms will be very similar, producing the additional terms \( \Sin/n \), \( \Sun/n \), \( \Vn/n \), \( \dou/n \) and \( \dun/n \) in the denominator.
  
    Now note that, since \( \Pr\left(\Mone{=}0\,|\,\vD_{1}{=}\vd\right)\leq1 \) and \( \lim\limits_{n\to\infty}{c^{(n)}}=1 \), we have that
    \begin{equation}
      \lim\limits_{n\to\infty}{\E\left[
      \left(n^{\din}\Qin\right)
      \left(n^{\dou}\Qou\right)
      \left(n^{\dun}\Qun\right)\right]}\leq1\end{equation}
    By the law of large numbers and using Slutsky's Theorem
    \begin{equation}
      \left(n^{\din}\Qin\right)
      \left(n^{\dou}\Qou\right)
      \left(n^{\dun}\Qun\right)
      \toind
      \left(\frac{\prod_{r=1}^{\din}{\Dou_{i_{r}}}}{(\muou)^{\din}}\right)
      \left(\frac{\prod_{r=1}^{\dou}{\Din_{j_{r}}}}{(\muou)^{\dou}}\right)
      \left(\frac{\prod_{r=1}^{\dun}{\Dun_{k_{r}}}}{(\muou)^{\dun}}\right)
    \end{equation}
    Here we used that \( \lim\limits_{n\to\infty}\frac{\Vn}{n}=\lim\limits_{n\to\infty} \frac{\Sin-\Sou}{n} = \muin-\muou = 0 \), by assumption.
    
    Since all \( \Din_{j_{r}} \), \( \Dou_{i_{r}} \) and \( \Dun_{k_{r}} \) are independent by construction we also have
    \begin{equation}
      \E\left[ \left(\frac{\prod_{r=1}^{\din}{\Dou_{i_{r}}}}{(\muou)^{\din}}\right)
      \left(\frac{\prod_{r=1}^{\dou}{\Din_{j_{r}}}}{(\muou)^{\dou}}\right)
      \left(\frac{\prod_{r=1}^{\dun}{\Dun_{k_{r}}}}{(\muou)^{\dun}}\right) \right]
      =1.
    \end{equation}
    Now we use \refLem{convergence} and immediately conclude that
    \begin{equation}
      \lim\limits_{n\to\infty}{\E\left[
      \left(n^{\din}\Qin\right)
      \left(n^{\dou}\Qou\right)
      \left(n^{\dun}\Qun\right)\right]}=1
    \end{equation}
    and thus also that 
    \begin{equation}
      \lim_{n\to\infty}
      {\Pr\left(\Mone{=}0\,|\,\vD_{1}{=}\vd\right)}
      =
      \lim_{n\to\infty}c^{(n)}{\E\left[
      \left(n^{\din}\Qin\right)
      \left(n^{\dou}\Qou\right)
      \left(n^{\dun}\Qun\right)\right]}
      =1.
    \end{equation}
    \end{enumerate}

    This is what we wanted to show and so the proof is complete.
\end{proof}

\section{Conclusions and Discussion}
\label{sec:concl}
We have shown a simple way to create a partially directed configuration model graph from a given joint degree distribution. The graph is simple, and under specified conditions the degree distribution converges to the desired one. The proof is generic and can be extended to any type of graph where stubs are saved from being erased if they connect to other (unique) vertices. The only assumptions in the proof are that the degrees of different vertices are independent, that the expectation of the degree of each type of stub is finite and that the expectation of the degree for the in-stubs is equal to the expectation for the degree of the out-stubs. This means that the proof works also for undirected graphs and for directed graphs, and also if the number of different types of stubs is increased to any finite number, as long as similar conditions as in this proof are fulfilled. Allowing for self loops and parallel edges only increases the chance of saving a stub from being erased and so is not a problem.

The main advantage of using a partially directed model to represent empirical networks, as opposed to using a completely directed or completely undirected model, is that the partially directed model preserves the proportion of undirected edges. This is important for networks where there is a significant proportion both of directed and of undirected edges, and where none of the different types of edges can be ignored. Examples of such graphs have been given in \refTab{directed}. The model also preserves any dependence between directed and undirected degrees present in the original empirical graph or the given degree distribution.

However, this model does not produce other structures that can often be found in empirical networks. E.g. it does not produce the same number of moderately sized strongly connected components that we see in the empirical networks. In this respect it does however perform slightly better than the configuration model on directed graphs. Possible improvements towards realism would be to see how e.g. triangles (of different types), different types of vertices and other heterogeneities could be included in the model.

\section*{Acknowledgements}
K.S. was supported by the Swedish Research Council, grant no. 2009-5759. T.B. is grateful to Riksbankens jubileumsfond (contract P12-0705:1) for financial support.

The authors would like to thank Pieter Trapman for discussions and for giving valuable input to the paper.

\end{document}